\theoremstyle{plain}
\newtheorem{theorem}{Theorem}[section]
\newtheorem{corollary}[theorem]{Corollary}
\newtheorem{proposition}[theorem]{Proposition}
\newtheorem{lemma}[theorem]{Lemma}
\theoremstyle{definition}
\newtheorem{definition}[theorem]{Definition}
\newtheorem*{remark*}{Remark}
\numberwithin{equation}{section}
\begin{document}
\title[Information geometry and $\alpha$-parallel prior of BLD]{Information geometry and $\alpha$-parallel prior of the beta-logistic distribution}
\author{Lin Jiu}
\address{Zu Chongzhi Center for Mathematics and Computational Sciences, Duke Kunshan University, Kunshan, Jiangsu Province, 215316, P.~R.~China}
\email{lin.jiu@dukekunshan.edu.cn}
\author{Linyu Peng}
\address{Department of Mechanical Engineering, Keio University, 
Yokohama 223-8522, Japan}
\email{l.peng@mech.keio.ac.jp}

\keywords{beta-logistic distribution, generalized hyperbolic secant distribution, Bernoulli polynomial, Euler polynomial, $\alpha$-parallel prior}
\subjclass[2020]{53B12, 11B68}
\begin{abstract}

The hyperbolic secant distribution has several generalizations with applications in finance. In this study, we explore the dual geometric structure of one such generalization, namely the beta-logistic distribution. Recent findings also interpret Bernoulli and Euler polynomials as moments of specific random variables, treating them as special cases within the framework of the beta-logistic distribution.  The current study also uncovers that  the beta-logistic distribution admits an $\alpha$-parallel prior for any real  number $\alpha$, that  has the potential for application in geometric statistical inference.
\end{abstract}

\maketitle

\section{Introduction}

The originality of the \emph{hyperbolic secant distribution} (HSD)  can date back to Fisher \cite{RAFisher}, with a simple probability density function on $\mathbb{R}$
\begin{equation}
\frac{\sech x}{\pi}=\frac{1}{\pi\cosh x}=\frac{2}{\pi(e^x+e^{-x})}. 
\end{equation}
Here, the factor $1/\pi$ is for normalization. A formal construction begins with two independent Gaussian distributions, say $Y_1$ and $Y_2$. It is well-known that $Y_1/Y_2$ admits a Cauchy distribution, with density $1/(\pi(x^2+1))$. Then, it is not hard to check that $\log(|Y_1/Y_2|)$ satisfies a HSD. 

Generalizations of HSD have been well studied, with applications in finance; see, e.g., \cite{SGHS} for a comprehensive collection. For example, Perks \cite{Perks} (see also \cite[Chpt.~2]{SGHS}) introduced the rational extension in $e^{-x}$, of the HSD, with the density function of the form
\begin{equation}
f(x)=\frac{a_0+a_1 e^{-x}+a_2 e^{-2 x}+\cdots+a_m e^{-m x}}{b_0+b_1 e^{-x}+b_2 e^{-2 x}+\cdots+b_n e^{-n x}},
\end{equation}
where proper parameters $a_0,a_1,\ldots,a_m,b_0,b_1,\ldots,b_n$ make $f(x)$ a probability density function.  
Baten \cite{Baten} (see also \cite[Chpt.~3]{SGHS}),  originates to consider the sum of i.i.d.~hyperbolic secant distributions. Then, Harkness and Harkness \cite{GHSD}, on the other hand, considered the sum of i.i.d., whose moment generating function is of the form $\sech^\rho (\alpha x)$. This leads to the \emph{skew generalized hyperbolic secant distribution}, with density
\begin{equation}
f(x;\rho,h)=\frac{2^{\rho-2}}{\pi \Gamma(\rho)} \cdot\left|\Gamma\left(\frac{\rho}{2}+\mathbf{i}\frac{x}{2}\right)\right|^2 \exp \left(h x+\rho \ln \cos (h)\right),
\end{equation}
where $\Gamma$ is the gamma function and $\mathbf{i}$ is the imaginary unit. The parameters are subject to $\rho>0$ and $|h|< \pi/2$. The special case $\rho\mapsto\rho/2$ and $x\mapsto x/2$ leads to the Meixner distribution. 

In this work, we focus on  another generalized hyperbolic secant distribution. Instead of having the moment generating function as the power of a hyperbolic secant function, we study the case that the density function being propositional to the power of the hyperbolic secant function. More precisely, we consider the family $X_{\theta}=X_{(\theta^1,\theta^2)}$ with the density function
\begin{equation}\label{eq:Density}
    p(x;\theta^{1},\theta^{2})=\frac{\sech^{\theta^{1}}(x)\cdot \exp({\theta^2 x})}{\int_{\mathbb{R}}\sech^{\theta^{1}}(x)\cdot \exp({\theta^2 x})\operatorname{d}\!x},\quad x\in\mathbb{R},
\end{equation}
where two parameters $\theta^1$ and $\theta^2$ are introduced, with $\theta^1\pm\theta^2>0$. This is, up to simple factors, the \emph{exponential generalized beta of the second kind (EGB2) distribution} or the \emph{beta-logistic distribution}, whose density function is given by, for $\beta_1,\beta_2>0$, 
\begin{equation}
f\left(x ; \beta_1, \beta_2\right)=\frac{1}{B\left(\beta_1, \beta_2\right)}\cdot \frac{\exp \left(\beta_1 x\right)}{(1+\exp (x))^{\beta_1+\beta_2}},
\end{equation}
involving the beta function $B(x,y)$. See, e.g., \cite[Sect.~4.4]{SGHS}. 

A second motivation on studying the family $X_\theta$ lies in  the probabilistic interpretations of Bernoulli and Euler polynomials, both of which are important polynomials appearing in various areas of mathematics, e.g., number theory, combinatorics, numerical analysis, etc. They are defined as follows. 
\begin{definition}
    The \emph{Bernoulli polynomials} $B_n(x)$ and \emph{Euler polynomials} $E_n(x)$ are defined by their exponential generating functions as follows
    \begin{equation}
    \sum_{n=0}^{\infty}B_{n}\left(x\right)\frac{t^{n}}{n!}=\frac{te^{xt}}{e^{t}-1}\qquad\text{and}\qquad
    \sum_{n=0}^{\infty}E_{n}\left(x\right)\frac{t^{n}}{n!}=\frac{2e^{xt}}{1+e^{t}}.
    \end{equation}
    In particular, $B_n=B_n(0)$ and $E_n=2^n E_n(1/2)$ are the \emph{Bernoulli numbers} and \emph{Euler numbers}, respectively.  
\end{definition}
The umbral calculus (see, e.g., \cite{Umbral}) allows us to apply the Bernoulli umbra $\mathcal{B}$ and Euler umbra $\mathcal{E}$, with only simple evaluations: 
\begin{equation}
eval\left[(\mathcal{B}+x)^n\right]=B_n(x)\qquad\text{and}\qquad
eval\left[(\mathcal{E}+x)^n\right]=E_n(x).
\end{equation}
The probabilistic interpretations involve the hyperbolic secant function. More precisely, we can let $L_B$ be the random variable subject to the density function $\pi\sech^2(\pi x)/2$ and $L_E$ be the one with density function $\sech(\pi t)$, then 
\begin{equation}\label{eq:RV}
    \mathcal{B}=\mathbf{i}L_B-\frac{1}{2}\qquad\text{and}\qquad\mathcal{E}=\mathbf{i}L_E-\frac{1}{2}.
\end{equation}
Namely, (see, e.g., \cite[Thm.~2.3]{Zagier1})
\begin{equation}
\begin{aligned}
B_{n}(x)=eval\left[(\mathcal{B}+x)^n\right]=\mathbb{E}\left[(\mathcal{B}+x)^n\right]=\frac{\pi}{2}\int_{\mathbb{R}}\left(x+\mathbf{i}t-\frac{1}{2}\right)^{n}\sech^{2}(\pi t)\operatorname{d}\!t,
\end{aligned}
\end{equation}
and (see, e.g., \cite[Prop.~2.1]{Euler})
\begin{equation}
\begin{aligned}
E_{n}(x)=eval\left[(\mathcal{E}+x)^n\right]=\mathbb{E}\left[(\mathcal{E}+x)^n\right]=\int_{\mathbb{R}}\left(x+\mathbf{i}t-\frac{1}{2}\right)^{n}\sech(\pi t)\operatorname{d}\!t.
\end{aligned}
\end{equation}

As both important polynomials are moments of certain random variables, with powers of hyperbolic secant in the density functions, studies on the family $X_\theta$ with density \eqref{eq:Density} may potentially reveal more hidden connections between $B_n(x)$ and $E_n(x)$, as well as general properties of the whole family. 

This paper is constructed as follows. In Section \ref{sec:pre}, we introduce basics in information geometry and several important special functions including Riemann and Hurwitz zeta functions, gamma and beta functions, and the polygamma function, in order to make this paper self-contained. Then, in Section \ref{sec:geometry}, we calculate the geometry structures of the beta-logistic distribution, such as Fisher information metric, $\alpha$-connections, $\alpha$- curvatures, and the geodesic equations. At the end of this section, as examples, we apply the formulas to the cases related to Bernoulli polynomials and Euler polynomials. Finally, in Section \ref{sec:conclustion}, we summarize the results and discuss some remarks on further work.

\section{Preliminaries}\label{sec:pre}
In this section, we will introduce some basics in differential geometry, information geometry, special functions, and etc., in order to make this paper self-contained. References are listed at the very beginning of each subsection. 
\subsection{Information Geometry}
Fundamentals of information geometry can be found, e.g., in \cite{Amari} and \cite{Sun}.
\begin{definition}
   For a probability density function $p(x;\theta)$, where the parameters are $\theta=(\theta^1,\theta^2,\ldots,\theta^n)\in \Theta\subset \mathbb{R}^n$, 
 we call 
 \begin{equation}
 M=\left\{ p(x;\theta)\mid(\theta^1,\theta^2,\ldots,\theta^n)\in \Theta\subset \mathbb{R}^n\right\} 
 \end{equation}
  \emph{an $n$-dimensional statistical manifold} with local coordinates $(\theta^1,\theta^2,\ldots,\theta^n)$.
\end{definition}

In the following, we assume all probability density functions satisfy the regularity conditions for computational convenience; for details of  the regularity conditions, the reader may refer to \cite{Amari}.

\begin{definition}
   The manifold $M$ becomes Riemannian by introducing the \emph{Fisher information metric} with local matrix expression $(g_{ij})$ defined by 
    \begin{equation}
    g_{ij}=\mathbb{E}\left[\partial_i l  \quad\!\!\! \partial_j l\right],\quad i,j=1,2,\ldots,n,
    \end{equation}
    where the function $l$ stands for $l(x;\theta)=\ln  p(x;\theta)$ and 
        \begin{equation}
    \partial_i l=\frac{\partial l(x;\theta)}{\partial \theta^i}.
    \end{equation}
    \end{definition}
    
    We denote the determinant of the Fisher information matrix as $\left|G\right|:=\det(g_{ij})$ and the inverse as $(g^{ij})=(g_{ij})^{-1}$.  For $i,j,k=1,2,\ldots,n$, the Riemannian connection coefficients $\Gamma_{ijk}$ with respect to the Fisher information metric are given by
    \begin{equation}
    \Gamma_{ijk}=\frac{1}{2}\left(\partial_ig_{jk}+\partial_jg_{ki}-\partial_kg_{ij}\right).
    \end{equation}

\begin{definition}
   Statistical manifolds admit a one-parameter family of dual connections, denoted by \emph{$\alpha$-connections}, with  connection coefficients  defined by
    \begin{equation}
    \Gamma_{i j k}^{(\alpha)}=\Gamma_{i j k}-\frac{\alpha}{2} T_{i j k},
    \end{equation}
    where 
    \begin{equation}\label{eq:Tijk}
        T_{i j k}=E\left[\partial_i l \quad\!\!\!  \partial_j l  \quad\!\!\! \partial_k l\right].
    \end{equation}

\end{definition}
    Standard computation gives the corresponding $\alpha$-curvatures. In the $\theta$ coordinate system, using the Einstein summation convention, 
    \begin{enumerate}
        \item the \emph{$\alpha$-curvature tensors} $R_{i j k l}^{(\alpha)}$ are given by
        \begin{equation}
        R_{i j k l}^{(\alpha)}=\left(\partial_j \Gamma_{i k}^{s(\alpha)}-\partial_i \Gamma_{j k}^{s(\alpha)}\right) g_{s l}+\left(\Gamma_{j t l}^{(\alpha)} \Gamma_{i k}^{t(\alpha)}-\Gamma_{i t l}^{(\alpha)} \Gamma_{j k}^{t(\alpha)}\right),\quad i,j,k,l=1,2,\ldots,n,
        \end{equation}
        where
        \begin{equation}
        \Gamma_{i j}^{k(\alpha)}=\Gamma_{i j s}^{(\alpha)} g^{s k};
        \end{equation}
        \item the \emph{$\alpha$-Ricci curvatures} $R_{i k}^{(\alpha)}$ (in components) are 
        \begin{equation}
        R_{i k}^{(\alpha)}=R_{i j k l}^{(\alpha)} g^{j l};
        \end{equation}
        \item the \emph{$\alpha$-scalar curvature} is 
        \begin{equation}
        R^{(\alpha)}=R^{(\alpha)}_{ij}g^{ij}.
        \end{equation}
    \end{enumerate}
    Finally, when $n=2$, the $\alpha$-sectional curvature coincides with the \emph{$\alpha$-Gaussian curvature}  given by $K^{(\alpha)}=R^{(\alpha)}_{1212}/|G|$. When $\alpha=0$, the connection becomes self-dual, which corresponds to the Riemannian structure. 


\begin{definition}
    The \emph{geodesic equations} of the manifold $(M,g)$ with coordinate $\theta=(\theta^1,\theta^2,\ldots,\theta^n)$ are 
    \begin{equation}
        \frac{\operatorname{d}^2\!\theta^k}{\operatorname{d}\!t^2}+\Gamma_{ij}^k\frac{\operatorname{d}\!\theta^i}{\operatorname{d}\!t}\frac{\operatorname{d}\!\theta^j}{\operatorname{d}\!t}=0.
    \end{equation}
\end{definition}

\subsection{Several special functions}
In this subsection, we provide the definitions, important properties of the gamma, beta, zeta, and polygamma functions. Most of them can be found, e.g., \cite[Chpts.~5 and 25]{DLMF}. 

\begin{definition}
The \emph{gamma function} $\Gamma(z)$ is defined to be the analytic continuation of the integral represented function
\begin{equation}
\Gamma(z)=\int_0^\infty e^{-t}t^{z-1}\operatorname{d}\!t, 
\end{equation}
on the right-half plane $\Re(z)>0$. It is a meromorphic function on $\mathbb{C}$ with no zeros, and simple poles of residue $(-1)^n/n!$ at nonpositive integer points $z=-n$, for $n=0,1,2,\ldots$.

The \emph{polygamma function of order $m$} is the $(m+1)$th derivative of the logarithm of $\Gamma(z)$, i.e., 
\begin{equation}
\psi^{(m)}=\frac{\operatorname{d}^m}{\operatorname{d}\!z^m}\psi(z)=\frac{\operatorname{d}^{m+1}}{\operatorname{d}\!z^{m+1}}\ln\Gamma(z),
\end{equation}
where $\psi(z)=\psi^{(0)}(z)=\Gamma'(z)/\Gamma(z)$.

The \emph{beta function $B(x,y)$} is the analytic continuation of the the integral represented function
\begin{equation}
B(x,y)=\int_{0}^{1}t^{x-1}(1-t)^{y-1}\operatorname{d}\!t,
\end{equation}
for $\Re(x)>0,\Re(y)>0$. 

The \emph{Riemann zeta-function} is the analytic continuation of the series expansion
\begin{equation}
\zeta(s)=\sum_{n=1}^\infty \frac{1}{n^s},
\end{equation}
and the \emph{Hurwitz zeta-function} is similarly defined, with an extra parameter $a\neq 0,-1,-2,\ldots$, 
\begin{equation}
\zeta(s,a)=\sum_{n=0}^\infty\frac{1}{(n+a)^s}.
\end{equation}
\end{definition}

\begin{lemma}\label{lem:BG}
The following identities are well known:
\begin{equation}
\begin{aligned}
    B(x,y)&=\frac{\Gamma(x)\Gamma(y)}{\Gamma(x+y)},\\
    \psi^{(m)}(z)&=(-1)^{m+1}m!\zeta(m+1,z).
\end{aligned}
\end{equation}
For a positive integer $k$, 
\begin{equation}
\psi^{(m)}(k)=(-1)^{m+1}m!\left(\zeta(m+1)-H_{k-1}^{(m+1)}\right),
\end{equation}
where the \emph{generalized harmonic number} is defined as 
\begin{equation}
H_n^{(r)}:=1+\frac{1}{2^r}+\frac{1}{3^r}+\cdots+\frac{1}{n^r}.
\end{equation}
\end{lemma}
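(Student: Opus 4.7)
The plan is to establish each of the three stated identities in turn, using only the Weierstrass product for $1/\Gamma$, the integral representation of $\Gamma$, and routine termwise differentiation of convergent series. Since the lemma is labelled as well known, the goal is a concise verification rather than any novel argument.

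For the first identity $B(x,y)=\Gamma(x)\Gamma(y)/\Gamma(x+y)$, I would begin with $\Gamma(x)\Gamma(y)$ written as a double integral on the first quadrant and perform the substitution $s=u\tau$, $t=u(1-\tau)$ with $(u,\tau)\in(0,\infty)\times(0,1)$ and Jacobian $u$. This separates the integral as
\begin{equation*}
\Gamma(x)\Gamma(y)=\int_0^\infty e^{-u}u^{x+y-1}\operatorname{d}\!u\cdot \int_0^1\tau^{x-1}(1-\tau)^{y-1}\operatorname{d}\!\tau=\Gamma(x+y)\,B(x,y),
\end{equation*}
valid first for $\Re(x),\Re(y)>0$ and then on the full meromorphic domain by analytic continuation.

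For the second identity $\psi^{(m)}(z)=(-1)^{m+1}m!\,\zeta(m+1,z)$, I would take the logarithm of the Weierstrass product
\begin{equation*}
\frac{1}{\Gamma(z)}=z\,e^{\gamma z}\prod_{n=1}^{\infty}\left(1+\frac{z}{n}\right)e^{-z/n}
\end{equation*}
and differentiate once to obtain $\psi(z)=-\gamma-1/z+\sum_{n=1}^{\infty}\bigl(1/n-1/(n+z)\bigr)$. Differentiating this $m$ more times, which is justified by uniform convergence of the resulting series on compact subsets of $\mathbb{C}\setminus\{0,-1,-2,\ldots\}$, eliminates the constant and the $1/n$ terms and yields
\begin{equation*}
\psi^{(m)}(z)=(-1)^{m+1}m!\sum_{n=0}^{\infty}\frac{1}{(n+z)^{m+1}}=(-1)^{m+1}m!\,\zeta(m+1,z).
\end{equation*}

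Finally, for the evaluation at a positive integer $k$, I would reindex the Hurwitz zeta series by $j=n+k$ to obtain $\zeta(m+1,k)=\sum_{n=0}^\infty(n+k)^{-(m+1)}=\sum_{j=k}^\infty j^{-(m+1)}=\zeta(m+1)-H_{k-1}^{(m+1)}$, which together with the second identity gives $\psi^{(m)}(k)=(-1)^{m+1}m!\bigl(\zeta(m+1)-H_{k-1}^{(m+1)}\bigr)$. None of the three steps poses a real obstacle; the only mild technical care lies in justifying the termwise differentiation (a Weierstrass $M$-test on compact sets) and the analytic continuation of the beta identity (both sides are meromorphic with matching poles and residues), both of which are standard textbook arguments.
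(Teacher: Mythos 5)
Your proof is correct: all three identities are verified by the standard arguments (the double-integral substitution for the beta--gamma relation, termwise differentiation of the Weierstrass product for the polygamma--Hurwitz identity, and reindexing the Hurwitz series for the evaluation at integers). The paper itself offers no proof, simply citing these facts as well known from the DLMF, so your verification is exactly the textbook argument the authors have in mind and nothing further needs comparison.
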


Finally, if we recall the Bateman's tables \cite[Entry 1.9.5]{FourierHypersecant}
that for $x\in\mathbb{R}$, 
\begin{equation}
\int_{0}^{\infty}\sech^{\rho}\left(\alpha t\right)\cos\left(xt\right)\operatorname{d}\!t=\frac{2^{\rho-2}}{\alpha}B\left(\frac{\rho}{2}+\frac{\mathbf{i}x}{2\alpha},\frac{\rho}{2}-\frac{\mathbf{i}x}{2\alpha}\right),
\end{equation}
then, by $e^{\mathbf{i}xt}=\cos(xt)+\mathbf{i}\sin(xt)$ and the parities of cosine and sine functions, we have 
\begin{equation}
\int_{\mathbb{R}}\sech^\rho te^{\mathbf{i}tx} \operatorname{d}\!t=2^{\rho-1}B\left(\frac{\rho+\mathbf{i}x}{2},\frac{\rho-\mathbf{i}x}{2}\right),
\end{equation}
which implies the density in \eqref{eq:Density} should be
\begin{equation}\label{eq:pdf}
    p(x;\theta^1,\theta^2)=\frac{2^{1-\theta^1}\sech^{\theta^1}(x) \exp(\theta^2 x)}{B\left(\frac{\theta^1-\theta^2}{2},\frac{\theta^1+\theta^2}{2}\right)},
\end{equation}
a beta-logistic distribution due to normalization.

\section{Geometric Structure of the Beta-Logistic Distribution}\label{sec:geometry}
Now, we consider the family $X_\theta$ and study the corresponding statistical manifold 
\begin{equation}\label{eq:stma}
M=\{p(x;\theta^1,\theta^2), x\in\mathbb{R}\mid \theta^1\pm\theta^2>0\},
\end{equation}
 where the density $p(x;\theta^1,\theta^2)$ is given by \eqref{eq:pdf}. Notice that 
\begin{equation}\label{eq:lll}
    l(x;\theta^1,\theta^2)=\ln p(x;\theta^1,\theta^2)=\theta^{1}\ln\sech(x)+\theta^{2}x-\phi(\theta^{1},\theta^{2}),
\end{equation}
where the potential function is 
\begin{equation}\label{eq:pot}
\phi(\theta^{1},\theta^{2})=\ln B\left(\frac{\theta^{1}-\theta^{2}}{2},\frac{\theta^{1}+\theta^{2}}{2}\right)+(\ln2)\left(\theta^{1}-1\right).
\end{equation}

\subsection{Dual geometric structures}
Eq. \eqref{eq:lll} implies that this gives an exponential family with $(\theta^1,\theta^2)$ the natural coordinates (see, e.g., \cite{Amari}), whose dual geometric structure can be totally determined by the potential function \eqref{eq:pot}.
 
For example, the Fisher information matrix  
\begin{equation}
g_{ij}=-\mathbb{E}\left[\partial_i\partial_j l\right]=\partial_i\partial_j\phi 
\end{equation}
is given by the Hessian of the potential function. 
Furthermore,
\begin{equation}
\Gamma_{i j k}^{(\alpha)}=\Gamma_{i j k}-\frac{\alpha}{2} T_{i j k}=\frac{1-\alpha}{2}\partial_i\partial_j\partial_k\phi,
\end{equation}
and
\begin{equation}\label{eq:CurvatureTensor}
    R_{ijkl}^{(\alpha)}=\frac{1-\alpha^{2}}{4}g^{mn}(T_{kmi}T_{jln}-T_{kmj}T_{iln}).
\end{equation}

\begin{proposition}
    The Fisher matrix $G=(g_{ij})_{2\times2}$ of the manifold  $M$ defined in \eqref{eq:stma} is given by 
    \begin{equation}\label{eq:fis}
    G=\left(\begin{matrix}\frac{1}{4}\psi'\left(\frac{\theta^{1}+\theta^{2}}{2}\right)+\frac{1}{4}\psi'\left(\frac{\theta^{1}-\theta^{2}}{2}\right)-\psi'(\theta^{1}) & \frac{1}{4}\psi'\left(\frac{\theta^{1}+\theta^{2}}{2}\right)-\frac{1}{4}\psi'\left(\frac{\theta^{1}-\theta^{2}}{2}\right)\\
    \\
    \frac{1}{4}\psi'\left(\frac{\theta^{1}+\theta^{2}}{2}\right)-\frac{1}{4}\psi'\left(\frac{\theta^{1}-\theta^{2}}{2}\right) &\frac{1}{4} \psi'\left(\frac{\theta^{1}+\theta^{2}}{2}\right)+\frac{1}{4}\psi'\left(\frac{\theta^{1}-\theta^{2}}{2}\right)
    \end{matrix}\right).
    \end{equation}
\end{proposition}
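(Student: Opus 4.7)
The proof is essentially a direct Hessian computation, enabled by the exponential family structure. The log density $l(x;\theta^1,\theta^2) = \theta^1 \ln\sech(x) + \theta^2 x - \phi(\theta^1,\theta^2)$ in \eqref{eq:lll} exhibits $X_\theta$ as an exponential family with natural parameters $(\theta^1,\theta^2)$ and potential $\phi$ from \eqref{eq:pot}. As noted just above the proposition, this means $g_{ij} = -\mathbb{E}[\partial_i\partial_j l] = \partial_i\partial_j \phi$, so the task reduces to computing the Hessian of $\phi$ and verifying that it matches the matrix in \eqref{eq:fis}.

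The plan is first to rewrite $\phi$ in a form where differentiation is transparent. Applying the identity $B(x,y) = \Gamma(x)\Gamma(y)/\Gamma(x+y)$ from Lemma \ref{lem:BG}, together with the observation that $\tfrac{\theta^1-\theta^2}{2} + \tfrac{\theta^1+\theta^2}{2} = \theta^1$, I would write
\begin{equation}
\phi(\theta^1,\theta^2) = \ln\Gamma\!\left(\tfrac{\theta^1-\theta^2}{2}\right) + \ln\Gamma\!\left(\tfrac{\theta^1+\theta^2}{2}\right) - \ln\Gamma(\theta^1) + (\ln 2)(\theta^1 - 1).
\end{equation}
The key point of this rewriting is that each of the three log-gamma terms depends on a simple linear combination of $\theta^1,\theta^2$, so the chain rule produces clean factors of $\pm\tfrac{1}{2}$ or $1$.

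Next I would compute the first partials, using $\psi = (\ln\Gamma)'$: differentiating in $\theta^1$ yields $\partial_1\phi = \tfrac{1}{2}\psi(\tfrac{\theta^1-\theta^2}{2}) + \tfrac{1}{2}\psi(\tfrac{\theta^1+\theta^2}{2}) - \psi(\theta^1) + \ln 2$, and differentiating in $\theta^2$ yields $\partial_2\phi = -\tfrac{1}{2}\psi(\tfrac{\theta^1-\theta^2}{2}) + \tfrac{1}{2}\psi(\tfrac{\theta^1+\theta^2}{2})$. A second round of differentiation, using $\psi' = \psi^{(1)}$ and applying the chain rule once more, produces the three entries $g_{11},g_{12},g_{22}$; the constant $\ln 2$ and the linear term $(\ln 2)(\theta^1-1)$ contribute nothing beyond the first derivative. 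Each entry picks up a factor of $\tfrac{1}{4}$ from the two halves, except the $\psi'(\theta^1)$ term in $g_{11}$, where the outer chain-rule factor is $1$, matching \eqref{eq:fis} exactly.

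There is no real obstacle in the argument; the entire proposition is a chain-rule exercise, and the main thing to watch is bookkeeping of signs and of the $\tfrac{1}{2}$ factors so that the off-diagonal entry $g_{12} = g_{21} = \tfrac{1}{4}\psi'(\tfrac{\theta^1+\theta^2}{2}) - \tfrac{1}{4}\psi'(\tfrac{\theta^1-\theta^2}{2})$ comes out with the correct antisymmetry between the two polygamma terms. The only conceptual step is invoking the exponential-family formula $g_{ij} = \partial_i\partial_j\phi$, which eliminates the need to compute any moments directly from the density \eqref{eq:pdf}.
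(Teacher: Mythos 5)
Your proposal is correct and follows exactly the paper's own route: rewrite the potential $\phi$ via $B(x,y)=\Gamma(x)\Gamma(y)/\Gamma(x+y)$ into a sum of log-gamma terms and then take the Hessian, using the exponential-family identity $g_{ij}=\partial_i\partial_j\phi$. The paper compresses the differentiation into ``direct calculation,'' while you spell out the chain-rule factors, but the argument is the same.
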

\begin{proof}
  Using Lemma \ref{lem:BG}, the potential function can be rewritten as
    \begin{equation}
    \phi(\theta^{1},\theta^{2})=\ln\Gamma\left(\frac{\theta^{1}-\theta^{2}}{2}\right)+\ln\Gamma\left(\frac{\theta^{1}+\theta^{2}}{2}\right)-\ln\Gamma\left(\theta^{1}\right)+(\ln2)\left(\theta^{1}-1\right).
    \end{equation}
Direct calculation then completes the proof. 
\end{proof}

    It is straightforward to compute the determinant, denoted 
    \begin{equation}\label{eq:fim}
    \det G= \frac{  \mathcal{G}}{4},
    \end{equation}
   where
       \begin{equation}
    \mathcal{G}=\psi'\left(\frac{\theta^1-\theta^2}{2}\right)\cdot\psi'\left(\frac{\theta^1+\theta^2}{2}\right)-\psi'(\theta^1)\left(\psi'\left(\frac{\theta^1-\theta^2}{2}\right)+\psi'\left(\frac{\theta^1+\theta^2}{2}\right)\right).
    \end{equation}
    Then the inverse matrix $(g^{ij})$ can be easily obtained,
    \begin{equation}
G^{-1}= \frac{1}{\mathcal{G}}\left(\begin{matrix}
    \psi'\left(\frac{\theta^{1}+\theta^{2}}{2}\right)+\psi'\left(\frac{\theta^{1}-\theta^{2}}{2}\right)
    & \psi'\left(\frac{\theta^{1}-\theta^{2}}{2}\right)-\psi'\left(\frac{\theta^{1}+\theta^{2}}{2}\right)\\
    \\
    \psi'\left(\frac{\theta^{1}-\theta^{2}}{2}\right)-\psi'\left(\frac{\theta^{1}+\theta^{2}}{2}\right)
    &\psi'\left(\frac{\theta^{1}+\theta^{2}}{2}\right)+\psi'\left(\frac{\theta^{1}-\theta^{2}}{2}\right)-4\psi'(\theta^{1}) 
    \end{matrix}\right).
    \end{equation}

Now, by direct computation, we can obtain the connection coefficients, curvature tensor, and curvatures as follows. 
\begin{proposition}
    We have
    \begin{enumerate}
        \item the $\alpha$-connection coefficients
        \begin{equation}
        \begin{aligned}
        \Gamma_{111}^{(\alpha)} & =\frac{1-\alpha}{16}\left\{ \psi''\left(\frac{\theta^{1}+\theta^{2}}{2}\right)+\psi''\left(\frac{\theta^{1}-\theta^{2}}{2}\right)-8\psi''(\theta^{1})\right\} ,\\
        \Gamma_{121}^{(\alpha)} & =\Gamma_{112}^{(\alpha)}=\Gamma_{211}^{(\alpha)}=\Gamma_{222}^{(\alpha)}=\frac{1-\alpha}{16}\left\{ \psi''\left(\frac{\theta^{1}+\theta^{2}}{2}\right)-\psi''\left(\frac{\theta^{1}-\theta^{2}}{2}\right)\right\} ,\\
        \Gamma_{122}^{(\alpha)} & =\Gamma_{212}^{(\alpha)}=\Gamma_{221}^{(\alpha)}=\frac{1-\alpha}{16}\left\{ \psi''\left(\frac{\theta^{1}+\theta^{2}}{2}\right)+\psi''\left(\frac{\theta^{1}-\theta^{2}}{2}\right)\right\};
        \end{aligned}
        \end{equation}
        \item the nonzero $\alpha$-curvature tensor
        \begin{equation}
        \begin{aligned}
        R_{1212}^{(\alpha)}&=R_{2121}^{(\alpha)}=-R_{1221}^{(\alpha)}=-R_{2112}^{(\alpha)}\\
        &=\frac{1-\alpha^{2}}{16\mathcal{G}}\left(\psi'(\theta^{1})\psi''\left(\frac{\theta^{1}+\theta^{2}}{2}\right)\psi''\left(\frac{\theta^{1}-\theta^{2}}{2}\right)-\psi''(\theta^{1}) \right.\\
        &\quad \quad  \left.\times\left\{ \psi'\left(\frac{\theta^{1}+\theta^{2}}{2}\right)\psi''\left(\frac{\theta^{1}-\theta^{2}}{2}\right)+\psi'\left(\frac{\theta^{1}-\theta^{2}}{2}\right)\psi''\left(\frac{\theta^{1}+\theta^{2}}{2}\right)\right\} \right);
        \end{aligned}
        \end{equation}
        \item  the $\alpha$-Ricci curvature
        \begin{align*}R_{11}^{(\alpha)} & =-\frac{(1-\alpha^{2})\left(-4\psi'(\theta^{1})+\psi'\left(\frac{\theta^{1}-\theta^{2}}{2}\right)+\psi'\left(\frac{\theta^{1}+\theta^{2}}{2}\right)\right)}{16\mathcal{G}^{2}}\\
        & \quad\times\left(\psi'\left(\frac{\theta^{1}+\theta^{2}}{2}\right)\psi''(\theta^{1})\psi''\left(\frac{\theta^{1}-\theta^{2}}{2}\right)\right.+\left(\psi'\left(\frac{\theta^{1}-\theta^{2}}{2}\right)\psi''(\theta^{1})\right.\\
        & \quad\quad\quad\left.\left.-\psi'(\theta^{1})\psi''\left(\frac{\theta^{1}-\theta^{2}}{2}\right)\right)\psi''\left(\frac{\theta^{1}+\theta^{2}}{2}\right)\right), \\ 
        R_{12}^{(\alpha)}=R_{21}^{(\alpha)} & =\frac{(1-\alpha^{2})\left(\psi'\left(\frac{\theta^{1}-\theta^{2}}{2}\right)-\psi'\left(\frac{\theta^{1}+\theta^{2}}{2}\right)\right)}{16\mathcal{G}^{2}}\\
        & \quad\times\left(\psi'\left(\frac{\theta^{1}+\theta^{2}}{2}\right)\psi''\left(\theta^{1}\right)\psi''\left(\frac{\theta^{1}-\theta^{2}}{2}\right)\right.+\left(\psi'\left(\frac{\theta^{1}-\theta^{2}}{2}\right)\psi''\left(\theta^{1}\right)\right.\\
        & \quad\quad\quad\left.\left.-\psi'\left(\theta^{1}\right)\psi''\left(\frac{\theta^{1}-\theta^{2}}{2}\right)\right)\psi''\left(\frac{\theta^{1}+\theta^{2}}{2}\right)\right),\\ 
        R_{22}^{(\alpha)} & =-\frac{(1-\alpha^{2})\left(\psi'\left(\frac{\theta^{1}-\theta^{2}}{2}\right)+\psi'\left(\frac{\theta^{1}+\theta^{2}}{2}\right)\right)}{16\mathcal{G}^{2}}\\
        &\quad  \times\left(\psi'\left(\frac{\theta^{1}+\theta^{2}}{2}\right)\psi''\left(\theta^{1}\right)\psi''\left(\frac{\theta^{1}-\theta^{2}}{2}\right)\right.+\left(\psi'\left(\frac{\theta^{1}-\theta^{2}}{2}\right)\psi''\left(\theta^{1}\right)\right.\\
        & \quad\quad\quad\left.\left.-\psi'\left(\theta^{1}\right)\psi''\left(\frac{\theta^{1}-\theta^{2}}{2}\right)\right)\psi''\left(\frac{\theta^{1}+\theta^{2}}{2}\right)\right);
        \end{align*}
        \item and finally the $\alpha$-scalar curvature, expressed as $R^{(\alpha)}=8R_{1212}^{(\alpha)}/\mathcal{G}$, which is $2K^{(\alpha)}$.
    \end{enumerate}
\end{proposition}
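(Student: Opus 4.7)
The identification in \eqref{eq:lll} of $(\theta^1,\theta^2)$ as natural coordinates of an exponential family reduces every object in the proposition to partial derivatives of the single potential $\phi$ from \eqref{eq:pot}: the $\alpha$-connection coefficients are $\Gamma_{ijk}^{(\alpha)}=\frac{1-\alpha}{2}\partial_i\partial_j\partial_k\phi$, the third-cumulant tensor is $T_{ijk}=\partial_i\partial_j\partial_k\phi$, and the curvature tensor is given by \eqref{eq:CurvatureTensor} with the explicit $g^{mn}$ already computed just before the proposition. The argument therefore splits naturally into three steps.

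\textbf{Step 1 (part 1).} I would differentiate $\phi=\ln\Gamma(u)+\ln\Gamma(v)-\ln\Gamma(\theta^1)+(\ln 2)(\theta^1-1)$ three times, where $u=(\theta^1-\theta^2)/2$ and $v=(\theta^1+\theta^2)/2$. Because $\partial_1 u=\frac{1}{2}$, $\partial_2 u=-\frac{1}{2}$ and $\partial_1 v=\partial_2 v=\frac{1}{2}$, every $\psi''(u)$ or $\psi''(v)$ appearing in a third derivative carries a factor $\pm 1/8$ with sign determined by the parity of $\theta^2$-differentiations, while the $-\ln\Gamma(\theta^1)$ contribution touches only $\partial_1^3\phi$. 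Multiplying the resulting five distinct third partials by $(1-\alpha)/2$ reproduces the formulas in part~(1); the stated equalities $\Gamma_{112}^{(\alpha)}=\Gamma_{222}^{(\alpha)}$ and $\Gamma_{122}^{(\alpha)}=\Gamma_{212}^{(\alpha)}=\Gamma_{221}^{(\alpha)}$ are precisely these sign coincidences combined with the total symmetry of $T_{ijk}$.

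\textbf{Step 2 (part 2).} Formula \eqref{eq:CurvatureTensor} is manifestly antisymmetric in $(i,j)$. Using the total symmetry of $T_{ijk}$ and the symmetry of $g^{mn}$, a short relabelling of summation indices shows it is also antisymmetric in $(k,l)$ and enjoys the pair-swap $R_{ijkl}^{(\alpha)}=R_{klij}^{(\alpha)}$, which in dimension two leaves $R_{1212}^{(\alpha)}$ as the only independent component and immediately explains the four sign-related identities in part~(2). The remaining task is to expand $R_{1212}^{(\alpha)}=\frac{1-\alpha^2}{4}g^{mn}(T_{1m1}T_{22n}-T_{1m2}T_{12n})$, insert the $T_{ijk}$ of Step~1 and the explicit $g^{mn}$ displayed before the proposition, and collect; after factoring $(1-\alpha^2)/(16\mathcal G)$, the surviving terms regroup into the three polygamma products listed.

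\textbf{Step 3 (parts 3 and 4).} The symmetries verified in Step~2 kill most terms in the contractions $R_{ik}^{(\alpha)}=g^{jl}R_{ijkl}^{(\alpha)}$: a case-by-case check using antisymmetry in $(i,j)$ and $(k,l)$ yields
\begin{equation*}
R_{11}^{(\alpha)}=g^{22}R_{1212}^{(\alpha)},\qquad R_{12}^{(\alpha)}=R_{21}^{(\alpha)}=-g^{12}R_{1212}^{(\alpha)},\qquad R_{22}^{(\alpha)}=g^{11}R_{1212}^{(\alpha)}.
\end{equation*}
Substituting the inverse-metric entries from the excerpt produces the three Ricci formulas. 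For part~(4), the scalar curvature becomes $R^{(\alpha)}=g^{ij}R_{ij}^{(\alpha)}=2R_{1212}^{(\alpha)}/|G|$ by the general two-dimensional identity, and using \eqref{eq:fim} this is $8R_{1212}^{(\alpha)}/\mathcal G$, which equals $2K^{(\alpha)}$ by the definition of the $\alpha$-Gaussian curvature.

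\textbf{Main obstacle.} The real difficulty is the collection in Step~2: expanding \eqref{eq:CurvatureTensor} at $(i,j,k,l)=(1,2,1,2)$ produces on the order of a dozen polygamma products that must simplify into the compact form claimed. My organizing principle will be the $\theta^2\mapsto-\theta^2$ (equivalently $u\leftrightarrow v$) symmetry of the model: splitting both $T_{ijk}$ and $g^{ij}$ into $u,v$-symmetric and antisymmetric parts parallels the natural split of the resulting polygamma products and keeps the accounting manageable.
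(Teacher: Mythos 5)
Your proposal is correct and follows essentially the same route as the paper, which simply states that the proposition follows by direct computation from the potential function of the exponential family; your steps (third derivatives of $\phi$ for the connection coefficients and $T_{ijk}$, the symmetries of \eqref{eq:CurvatureTensor} reducing everything to $R_{1212}^{(\alpha)}$, and the two-dimensional contractions $R_{11}^{(\alpha)}=g^{22}R_{1212}^{(\alpha)}$, $R_{12}^{(\alpha)}=-g^{12}R_{1212}^{(\alpha)}$, $R_{22}^{(\alpha)}=g^{11}R_{1212}^{(\alpha)}$, $R^{(\alpha)}=2R_{1212}^{(\alpha)}/|G|$) are exactly the computation the paper omits, and they check out against the stated formulas.
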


\begin{proof}
This can be proved through a direct computation and details are omitted here.
\end{proof}

\begin{corollary}
         Since $M$ is an exponential family, the statistical manifold is $\pm 1$-flat. 
        When $\alpha=0$, the Gaussian curvature is 
        \begin{equation*}
        \begin{aligned}
        K & = \frac{\psi''\left(\frac{\theta^{1}+\theta^{2}}{2}\right)\left(\psi''\left(\frac{\theta^{1}-\theta^{2}}{2}\right)\psi'(\theta^{1})-\psi''(\theta^{1})\psi'\left(\frac{\theta^{1}-\theta^{2}}{2}\right)\right)}{4\left(\psi'(\theta^{1})\psi'\left(\frac{\theta^{1}-\theta^{2}}{2}\right)+\left(\psi'(\theta^{1})-\psi'\left(\frac{\theta^{1}-\theta^{2}}{2}\right)\right)\psi'\left(\frac{\theta^{1}+\theta^{2}}{2}\right)\right)^{2}}\\ & \quad -\frac{\psi''(\theta^{1})\psi''\left(\frac{\theta^{1}-\theta^{2}}{2}\right)\psi'\left(\frac{\theta^{1}+\theta^{2}}{2}\right)}{4\left(\psi'(\theta^{1})\psi'\left(\frac{\theta^{1}-\theta^{2}}{2}\right)+\left(\psi'(\theta^{1})-\psi'\left(\frac{\theta^{1}-\theta^{2}}{2}\right)\right)\psi'\left(\frac{\theta^{1}+\theta^{2}}{2}\right)\right)^{2}}.
       \end{aligned} 
        \end{equation*}
\end{corollary}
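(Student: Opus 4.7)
The plan is to deduce both assertions by direct substitution into the formulas already established in the preceding proposition, together with the general formula \eqref{eq:CurvatureTensor} for the $\alpha$-curvature tensor of an exponential family and the determinant formula \eqref{eq:fim}; no new machinery is required beyond algebraic repackaging.

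For the $\pm 1$-flatness, I would note that the $\alpha$-connection coefficients displayed in the proposition all carry the common factor $(1-\alpha)/16$, so that $\Gamma_{ijk}^{(1)}\equiv 0$ identically in the chart $(\theta^1,\theta^2)$. This says the $(+1)$-connection has vanishing Christoffel symbols, hence is flat, and the natural parameters are $e$-affine. The $(-1)$-flatness then follows either from the general duality between $\nabla^{(\alpha)}$ and $\nabla^{(-\alpha)}$ with respect to the Fisher metric (which produces the expectation parameters as $m$-affine coordinates), or, more directly, from the prefactor $\frac{1-\alpha^2}{4}$ in \eqref{eq:CurvatureTensor}, which makes every component of the $\alpha$-curvature tensor vanish at $\alpha=\pm 1$. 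Either route is standard (see \cite{Amari}).

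For the $\alpha=0$ Gaussian curvature, I would start from $K = R_{1212}^{(0)}/|G|$. Substituting $|G|=\mathcal{G}/4$ from \eqref{eq:fim} and the $\alpha=0$ specialization of the $R_{1212}^{(\alpha)}$ formula in the proposition gives
\begin{equation*}
K = \frac{4}{\mathcal{G}} \cdot \frac{1}{16\mathcal{G}} \cdot N = \frac{N}{4\mathcal{G}^2},
\end{equation*}
where $N$ denotes the bracketed polygamma expression appearing in that proposition. To match the form displayed in the corollary, I would then rewrite $\mathcal{G}^2$ as the equivalent square
\begin{equation*}
\mathcal{G}^2 = \left(\psi'(\theta^1)\psi'\bigl(\tfrac{\theta^1-\theta^2}{2}\bigr) + \bigl(\psi'(\theta^1)-\psi'\bigl(\tfrac{\theta^1-\theta^2}{2}\bigr)\bigr)\psi'\bigl(\tfrac{\theta^1+\theta^2}{2}\bigr)\right)^2,
\end{equation*}
the overall sign discrepancy $\mathcal{G}=-(\cdots)$ being absorbed by the square, and I would split $N$ into the two pieces that appear in the corollary: the two monomials sharing the factor $\psi''\bigl(\tfrac{\theta^1+\theta^2}{2}\bigr)$ collect into the first fraction, while the remaining monomial, proportional to $\psi''(\theta^1)\psi''\bigl(\tfrac{\theta^1-\theta^2}{2}\bigr)\psi'\bigl(\tfrac{\theta^1+\theta^2}{2}\bigr)$, becomes the second fraction.

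The only real obstacle is bookkeeping: one has to verify that the prefactor $\frac{1}{16\mathcal{G}}$ in $R_{1212}^{(0)}$, the factor $\frac{4}{\mathcal{G}}$ from $1/|G|$, and the sign rewriting of $\mathcal{G}^2$ all combine consistently to produce the exact coefficient $\frac{1}{4\mathcal{G}^2}$ written in the corollary, and that the three monomials in $N$ distribute correctly among the two displayed fractions. This is a finite, purely algebraic verification, and I do not anticipate any genuine conceptual difficulty.
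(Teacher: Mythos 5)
Your proposal is correct and follows essentially the same (implicit) route as the paper: the corollary is a direct consequence of the preceding proposition, obtained by setting $\alpha=\pm 1$ in the $(1-\alpha)$- and $(1-\alpha^2)$-prefactors for flatness and by substituting $R^{(0)}_{1212}$ and $|G|=\mathcal{G}/4$ into $K=R^{(0)}_{1212}/|G|$, noting that the displayed denominator is $(-\mathcal{G})^2=\mathcal{G}^2$. Your bookkeeping checks out: the three monomials of the bracketed expression distribute exactly as you describe, and the coefficients combine to $1/(4\mathcal{G}^2)$.
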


\subsection{Bernoulli and Euler cases}

Recall that the random variable interpretation of Bernoulli and Euler umbras in \eqref{eq:RV}, both $L_B$ and $L_E$ are beta-logistic distributions with $\theta^2=0$, which make the Fisher matrix $G$ diagonal. 
In particular, we emphasize the followings: 
\begin{enumerate}
    \item $L_B$, i.e., $\theta^1=2$ and $\theta^2=0$, which we will call the Bernoulli case;
    \item $L_E$, i.e., $\theta^1=1$ and $\theta^2=0$, which we will call the Euler case.
\end{enumerate}

\begin{remark*}
    For the Bernoulli case, $\theta^{1}=2$ and $\theta^{2}=0$, we have
    \begin{equation*}
    G_{\left(2,0\right)}=\left(\begin{matrix}\frac{1}{2}\psi'\left(1\right)-\psi'\left(2\right) & 0\\
    0 & \frac{1}{2}\psi'\left(1\right)
    \end{matrix}\right)=\left(\begin{matrix}1-\frac{\pi^{2}}{12} & 0\\
    0 & \frac{\pi^{2}}{12}
    \end{matrix}\right).
    \end{equation*}
    Therefore, all the curvatures are rational in $\pi$ and $\zeta(3)$:
    \begin{align*}
    R_{1212}^{(\alpha)} & =\frac{3\left(1-\alpha^{2}\right)\zeta\left(3\right)\left(6\zeta\left(3\right)+\pi^{2}\zeta\left(3\right)-2\pi^{2}\right)}{2\pi^{2}\left(\pi^{2}-12\right)},\\
    R_{11}^{(\alpha)} & =\frac{18\left(1-\alpha^{2}\right)\zeta\left(3\right)\left(6\zeta\left(3\right)+\pi^{2}\zeta\left(3\right)-2\pi^{2}\right)}{\pi^{4}\left(\pi^{2}-12\right)},\\
    R_{12}^{(\alpha)} & =R_{21}^{(\alpha)}=0,\\
    R_{22}^{(\alpha)} & =-\frac{18\left(1-\alpha^{2}\right)\zeta\left(3\right)\left(6\zeta\left(3\right)+\pi^{2}\zeta\left(3\right)-2\pi^{2}\right)}{\pi^{2}\left(\pi^{2}-12\right)^{2}},
    \end{align*}
    and 
    \begin{equation*}
    R^{(\alpha)}=-\frac{432\left(1-\alpha^{2}\right)\zeta\left(3\right)\left(6\zeta\left(3\right)+\pi^{2}\zeta\left(3\right)-2\pi^{2}\right)}{\pi^{4}\left(\pi^{2}-12\right)^{2}}.
    \end{equation*}
    For the Euler case, $\theta^{1}=1$ and $\theta^{2}=0$, the Fisher information metric is
    \begin{equation*}
    G_{\left(1,0\right)}=\left(\begin{matrix}\frac{\pi^{2}}{12} & 0\\
    0 & \frac{\pi^{2}}{4}
    \end{matrix}\right),
    \end{equation*}
   and the curvatures are
    \begin{align*}
    R_{1212}^{(\alpha)} & =\frac{7\left(1-\alpha^{2}\right)\zeta\left(3\right)^{2}}{2\pi^{2}},\\
    R_{11}^{(\alpha)} & =\frac{14\left(1-\alpha^{2}\right)\zeta\left(3\right)^{2}}{\pi^{4}},\quad R_{12}^{(\alpha)} =R_{21}^{(\alpha)}=0,\quad R_{22}^{(\alpha)}  =\frac{42\left(1-\alpha^{2}\right)\zeta\left(3\right)^{2}}{\pi^{4}},\\
    R^{(\alpha)}&=\frac{336\left(1-\alpha^{2}\right)\zeta\left(3\right)^{2}}{\pi^{6}}.
    \end{align*}
    \end{remark*}
  
  \subsection{Geodesics and their stability}
Finally, we obtain the geodesics (of the Riemannian case) and analyze their stability.
\begin{proposition}\label{prop:geodesic}
    The geodesic equations are given by
            \begin{align*}
       & \frac{\operatorname{d}^{2}\!\theta^{1}}{\operatorname{d}\!t^{2}}-
        \left(\frac{\psi'\left(\frac{\theta^{1}+\theta^{2}}{2}\right)\left(4\psi''(\theta^{1})-\psi''\left(\frac{\theta^{1}-\theta^{2}}{2}\right)\right)}{8\mathcal{G}}\right.\\
        &\quad\quad \quad\quad  \quad\quad   + \left. \frac{\psi'\left(\frac{\theta^{1}-\theta^{2}}{2}\right)\left(4\psi''(\theta^{1})-\psi''\left(\frac{\theta^{1}+\theta^{2}}{2}\right)\right)}{8\mathcal{G}}\right)\left(\frac{\operatorname{d}\!\theta^{1}}{\operatorname{d}\!t}\right)^{2}\\
        &+\frac{\psi''\left(\frac{\theta^{1}+\theta^{2}}{2}\right)\psi'\left(\frac{\theta^{1}-\theta^{2}}{2}\right)-\psi''\left(\frac{\theta^{1}-\theta^{2}}{2}\right)\psi'\left(\frac{\theta^{1}+\theta^{2}}{2}\right)}{4\mathcal{G}}\frac{\operatorname{d}\!\theta^{1}}{\operatorname{d}\!t}\frac{\operatorname{d}\!\theta^{2}}{\operatorname{d}\!t}\\
        &+\frac{\psi''\left(\frac{\theta^{1}-\theta^{2}}{2}\right)\psi'\left(\frac{\theta^{1}+\theta^{2}}{2}\right)+\psi''\left(\frac{\theta^{1}+\theta^{2}}{2}\right)\psi'\left(\frac{\theta^{1}-\theta^{2}}{2}\right)}{8\mathcal{G}}\left(\frac{\operatorname{d}\!\theta^{2}}{\operatorname{d}\!t}\right)^{2}=0
    \end{align*}
    and 
    \begin{align*}
        &\frac{\operatorname{d}^{2}\!\theta^{2}}{\operatorname{d}\!t^{2}}+\left(\frac{\psi'\left(\frac{\theta^{1}+\theta^{2}}{2}\right)\left(4\psi''(\theta^{1})-\psi''\left(\frac{\theta^{1}-\theta^{2}}{2}\right)\right)}{8\mathcal{G}}\right.+\frac{\left(\psi''\left(\frac{\theta^{1}-\theta^{2}}{2}\right)-\psi''\left(\frac{\theta^{1}+\theta^{2}}{2}\right)\right)\psi'(\theta^{1})}{4\mathcal{G}}\\
        &\quad \quad \quad\quad  \quad\quad    +\left.\frac{\left(\psi''\left(\frac{\theta^{1}+\theta^{2}}{2}\right)-4\psi''(\theta^{1})\right)\psi'\left(\frac{\theta^{1}-\theta^{2}}{2}\right)}{8\mathcal{G}}\right)\left(\frac{\operatorname{d}\!\theta^{1}}{\operatorname{d}\!t}\right)^{2} \allowdisplaybreaks\\
        &+ \frac{\psi''\left(\frac{\theta^{1}-\theta^{2}}{2}\right)\left(\psi'\left(\frac{\theta^{1}+\theta^{2}}{2}\right)-2\psi'(\theta^{1})\right)
        +\psi''\left(\frac{\theta^{1}+\theta^{2}}{2}\right)\left(\psi'\left(\frac{\theta^{1}-\theta^{2}}{2}\right)-2\psi'(\theta^{1})\right)}{4\mathcal{G}} \frac{\operatorname{d}\!\theta^{1}}{\operatorname{d}\!t}\frac{\operatorname{d}\!\theta^{2}}{\operatorname{d}\!t}\\
        &+\frac{\psi''\left(\frac{\theta^{1}-\theta^{2}}{2}\right)\left(2\psi'(\theta^{1})-\psi'\left(\frac{\theta^{1}+\theta^{2}}{2}\right)\right)
        +\psi''\left(\frac{\theta^{1}+\theta^{2}}{2}\right)\left(\psi'\left(\frac{\theta^{1}-\theta^{2}}{2}\right)-2\psi'(\theta^{1})\right)}{8\mathcal{G}}\left(\frac{\operatorname{d}\!\theta^{2}}{\operatorname{d}\!t}\right)^{2}=0.
    \end{align*}
\end{proposition}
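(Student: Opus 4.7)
\medskip

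\noindent\textbf{Proof proposal.} The plan is to specialize to the Levi-Civita case $\alpha=0$ and read off the geodesic equations
\[
\frac{\operatorname{d}^{2}\!\theta^{k}}{\operatorname{d}\!t^{2}}+\Gamma_{ij}^{k}\,\frac{\operatorname{d}\!\theta^{i}}{\operatorname{d}\!t}\frac{\operatorname{d}\!\theta^{j}}{\operatorname{d}\!t}=0
\]
by assembling the Christoffel symbols of the second kind $\Gamma_{ij}^{k}=\Gamma_{ijs}\,g^{sk}$. Both ingredients are already in hand: the $\Gamma_{ijk}$ are obtained by specializing the $\alpha$-connection coefficients of the preceding proposition to $\alpha=0$, and the inverse Fisher matrix $(g^{ij})=G^{-1}$ was displayed immediately after \eqref{eq:fim}.

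The main step is the index raising $\Gamma_{ij}^{k}=\Gamma_{ij1}\,g^{1k}+\Gamma_{ij2}\,g^{2k}$, executed for each of the six independent pairs $(ij,k)$. To keep the algebra transparent, I would introduce the shorthand $A=\psi'\!\left(\frac{\theta^{1}-\theta^{2}}{2}\right)$, $B=\psi'\!\left(\frac{\theta^{1}+\theta^{2}}{2}\right)$, $C=\psi'(\theta^{1})$, together with primed counterparts $A',B',C'$ for the associated $\psi''$ values, so that $\mathcal{G}=AB-C(A+B)$. Each $\Gamma_{ij}^{k}$ is then a two-term sum whose expansion simplifies by substantial cancellation; for instance,
\[
\Gamma_{11}^{1}=\frac{(B'+A'-8C')(A+B)+(B'-A')(A-B)}{16\mathcal{G}}=\frac{AB'+BA'-4C'(A+B)}{8\mathcal{G}},
\]
which coincides with the stated coefficient of $(\operatorname{d}\!\theta^{1}/\operatorname{d}\!t)^{2}$ in the first geodesic equation once the overall minus sign in front of the parenthesised expression there is distributed. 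The remaining symbols $\Gamma_{12}^{1},\Gamma_{22}^{1},\Gamma_{11}^{2},\Gamma_{12}^{2},\Gamma_{22}^{2}$ are expanded analogously and reproduce the remaining coefficients of the two equations.

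The only genuine obstacle is bookkeeping: expanding a handful of binomial products and collecting like terms symmetrically in $A,B,C$ and their primed versions. One small point of care is the factor of two on the cross terms: since $\Gamma_{12}^{k}=\Gamma_{21}^{k}$, the contraction in the geodesic equation contributes $2\Gamma_{12}^{k}\,\operatorname{d}\!\theta^{1}\operatorname{d}\!\theta^{2}$, which explains why the cross-term coefficients in the proposition carry $1/(4\mathcal{G})$ while the pure-square coefficients carry $1/(8\mathcal{G})$. No new ingredient beyond direct computation is required.
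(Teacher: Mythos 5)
Your proposal is correct and follows exactly the route the paper (implicitly) takes: the paper states this proposition without a written proof, treating it as the direct computation $\Gamma_{ij}^{k}=\Gamma_{ijs}g^{sk}$ from the $\alpha=0$ connection coefficients and the displayed inverse Fisher matrix, substituted into the geodesic equation. Your shorthand bookkeeping, the sample computation of $\Gamma_{11}^{1}$, and the observation about the factor of two on the cross terms all check out against the stated coefficients.
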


The geodesic equations cannot be easily solved analytically, and here we seek for its numerical solutions by using the ode45 of Matlab in the domain $\theta^1\pm\theta^2>0$. Fig.~\ref{fig:geo} shows a variety of  geodesics starting from $(1,0)$. 

The relative geodesic spread can be characterised by the Jacobi field, which satisfies the
Jacobi--Levi-Civita equations (see, e.g., \cite{doC1992,Pengetal2011}). Fig.~\ref{fig:geo}  implies that the geodesic spread is at least linearly unstable. 

\begin{figure}[htbp]
\centering
\includegraphics[scale=0.3]{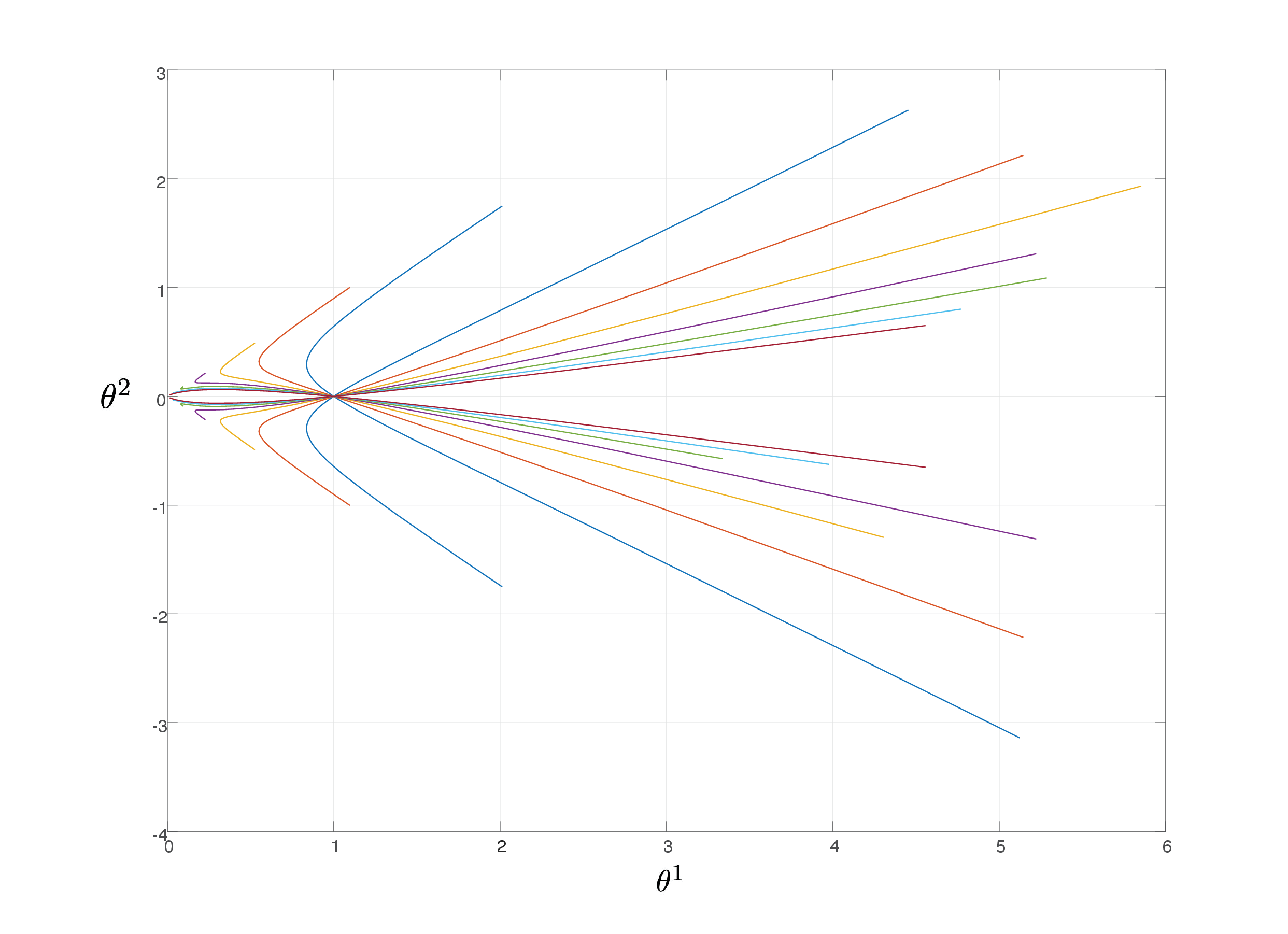}
\caption{Geodesics starting from  $(1,0)$ in the domain $\theta^1\pm\theta^2>0$. }
\label{fig:geo}
\end{figure}

%
%
%
%

\section{$\alpha$-parallel prior}

Takeuchi and Amari \cite{alphaParallel} studies the invariant $\alpha$-parallel priors with respect to statistical manifolds. Our statistical manifold $M$ given by \eqref{eq:stma} is $\pm 1$-flat, and is hence statistically equiaffine, i.e., 
\begin{equation}
\partial_i T_{jk}^k=\partial_jT_{ik}^k,\quad i,j=1,2,
\end{equation}
holds everywhere in $M$, where $T_{ij}^k=T_{ijl}g^{lk}$.
Consequently, $M$ admits $\alpha$-parallel prior $\omega^{(\alpha)}(\theta)$ for any $\alpha\in\mathbb{R}$, satisfying 
\begin{equation}
\partial_i\omega^{(\alpha)}(\theta) = \Gamma_{ij}^{(\alpha)j} \omega^{(\alpha)}(\theta).
\end{equation}
For the exponential family $M$, this can be simply solved, yielding 
\begin{equation}\label{eq:om}
\omega^{(\alpha)}(\theta)  = \left(\det G\right)^{(1-\alpha)/2},
\end{equation}
where $G$ is the Fisher matrix \eqref{eq:fis}.
In particular, Jeffreys prior (e.g., \cite{Jef1998}) is the $0$-prior, while the left-invariant Haar measure (e.g., \cite{Zidek1969}) is the $-1$-prior.


For a series of independent observations $\mathbf{x}=\{x_1,x_2,\ldots,x_N\}$ from the beta-logistic distribution \eqref{eq:pdf} with unknown parameters $\theta^1,\theta^2$, the posterior distribution $p_{\alpha}(\theta^1,\theta^2 \mid \mathbf{x}) $ can be obtained by employing \eqref{eq:om} as the improper prior. From Bayes' theorem, we have 
\begin{equation}\label{eq:pd}
p_{\alpha}(\theta^1,\theta^2 \mid \mathbf{x}) = \frac{p(\mathbf{x};\theta^1,\theta^2) \left(\det G\right)^{(1-\alpha)/2}}{A(\mathbf{x})},
\end{equation}
where
\begin{equation}
A(\mathbf{x})=\int_0^{\infty} \int_{-\theta^1}^{\theta^1} p(\mathbf{x};\theta^1,\theta^2) \left(\det G\right)^{(1-\alpha)/2} \operatorname{d}\!\theta^2\operatorname{d}\!\theta^1.
\end{equation}
The joint distribution of the observations $\mathbf{x}$ is
\begin{equation}
\begin{aligned}
p(\mathbf{x};\theta^1,\theta^2) =\prod_{i=1}^N p(x_i;\theta^1,\theta^2)
= \frac{2^{N(1-\theta^1)}\left(a(\mathbf{x})\right)^{\theta^1} \exp(\theta^2 b(\mathbf{x}))}{B^N\left(\frac{\theta^1-\theta^2}{2},\frac{\theta^1+\theta^2}{2}\right)},
\end{aligned}
\end{equation}
where
\begin{equation}
a(\mathbf{x})=\prod\limits_{i=1}^N\sech(x_i),\quad b(\mathbf{x})=\sum\limits_{i=1}^Nx_i. 
\end{equation}
The maximum a posteriori estimation for the parameters $\theta=(\theta^1,\theta^2)$ can hence be obtained by
\begin{equation}
\widehat{\theta} =\underset{\theta}{\arg \max} ~~ p_{\alpha}(\theta^1,\theta^2 \mid \mathbf{x}).
\end{equation}
Introduce natural logarithm of the posterior distribution \eqref{eq:pd}
\begin{equation}
\begin{aligned}
l_{\alpha}(\theta^1,\theta^2\mid \mathbf{x})& =\log p_{\alpha}(\theta^1,\theta^2\mid \mathbf{x})\\
&= N(1-\theta^1)\log 2+ \theta^{1}\log a(\mathbf{x})+\theta^{2}b(\mathbf{x})   -N\log B\left(\frac{\theta^1-\theta^2}{2},\frac{\theta^1+\theta^2}{2}\right) \\ & \quad +\frac{1-\alpha}{2}\log \det G - \log A(\mathbf{x}),
\end{aligned}
\end{equation}
where determinant of the Fisher  matrix is (see eq. \eqref{eq:fim})
\begin{equation}
\det G=4\left(\psi'\left(\frac{\theta^{1}-\theta^{2}}{2}\right)\cdot\psi'\left(\frac{\theta^{1}+\theta^{2}}{2}\right)-\psi'(\theta^{1})\left(\psi'\left(\frac{\theta^{1}-\theta^{2}}{2}\right)+\psi'\left(\frac{\theta^{1}+\theta^{2}}{2}\right)\right)\right),
\end{equation}
and the maximum a posteriori estimation $\widehat{\theta}$ can be determined by $\nabla l_{\alpha}=0$, where $\nabla$ denotes the gradient with respect to $\theta$. These equations can be obtained by direct calculation, which read
\begin{equation}\label{eq:maxipos}
\begin{aligned}
\log \frac{a(\mathbf{x})}{2^N}-\frac{N}{2}\left(\psi\left(\frac{\theta^1+\theta^2}{2}\right)+\psi\left(\frac{\theta^1-\theta^2}{2}\right)-2\psi(\theta^1)  \right)+\frac{1-\alpha}{2}\frac{\partial_{\theta^1}\!\det G}{\det G} &=0,\\
b(\mathbf{x})-\frac{N}{2}\left(\psi\left(\frac{\theta^1+\theta^2}{2}\right)-\psi\left(\frac{\theta^1-\theta^2}{2}\right)\right)+\frac{1-\alpha}{2}\frac{\partial_{\theta^2}\!\det G}{\det G}&=0.
\end{aligned}
\end{equation}
Here, we used the properties 
\begin{equation}
B(x,y)=B(y,x) \text{ and } \partial_xB(x,y)=B(x,y)(\psi(x)-\psi(x+y)).
\end{equation}
Solving the system \eqref{eq:maxipos}  analytically proves to be challenging. Numerical methods can be utilised in practical applications, such as Newton's method, gradient descent, and fixed-point iteration.

\section{Conclusion}\label{sec:conclustion}
Inspired by the probabilistic interpretation of Bernoulli and Euler polynomials as moments of certain distributions, we study the exponential generalized beta of the second distribution, which includes both Bernoulli and Euler cases as special evaluations. Besides the geometric structure, e.g., Fisher information metric, $\alpha$-connection coefficients, $\alpha$-curvature tensors, $\alpha$-Ricci curvature, and the $\alpha$-scalar curvature, we also provide the geodesic equations, which imply that the geodesic spread is at least linearly unstable. Finally, as the statistical manifold is $\pm 1$-flat and hence equiaffine, we give the $\alpha$-parallel prior and the equations determining the maximum a posteriori estimation. Some future work include analytic and numerical solutions to those differential equations, though the former seems to be challenging.

{\bf Acknowledgement.} L. Peng was partially supported by JSPS KAKENHI grant number JP20K14365, JST-CREST grant number JPMJCR1914, the Fukuzawa Fund and KLL of Keio University.


\begin{thebibliography}{99}
\bibitem{Amari}S.~Amari and H.~Nagaoka, \textit{Methods of Information
Geometry}, Transl.~Math.~Monogr., vol 191, Oxford University Press,
Oxford, 2000.

\bibitem{Baten}W.~D.~Baten, The probability law for the sum of $n$ independent variables, each subject to the
law $(1/(2h)) \sech (\pi x/(2h))$, \emph{Bull.~Am.~Math.~Soc}.~\textbf{40} (1934), 284--290.

\bibitem{Zagier1}A.~Dixit, V.~H.~Moll, and C.~Vignat, The Zagier
modification of Bernoulli numbers and a polynomial extension. Part
I, \emph{Ramanujan J.}~\textbf{33} (2014), 379--422. 

\bibitem{doC1992} M. P. do Carmo, {\it Riemannian Geometry}, Birkhäuser, Boston, 1992.

\bibitem{SGHS}M.~J.~Fischer, \emph{Generalized Hyperbolic
Secant Distributions: With Applications to Finance}, Springer,  2014. 

\bibitem{RAFisher}R.~A.~Fisher, On the ``Probable Error'' of a coefficient of correlation deduced from a small
sample volume I., \emph{Metron} \textbf{4} (1921), 3--32.

\bibitem{GHSD}W.~L.~Harkness and M.~L.~Harkness, Generalized
hyperbolic secant distributions, \emph{J.~Amer.~Statist.~Assoc}.~\textbf{63}
(1968), 329--337.

\bibitem{Jef1998} H. Jeffreys, {\it Theory of Probability}, 3rd ed., Oxford University Press, Oxford, 1998.

\bibitem{Euler}L.~Jiu, V.~H.~Moll, and C.~Vignat, Identities
for generalized Euler polynomials, \emph{Integral Transforms Spec.
Funct.} {\bf 25} (2014), 777--789.

\bibitem{LSP2022} M. Li, H. Sun, and L. Peng, Fisher--Rao geometry and Jeffreys prior for Pareto distribution, {\it Commun. Stat. Theory Methods} {\bf 51} (2022), 1895--1910. 

\bibitem{FourierHypersecant}W.~Magnus, F.~Oberhettinger, and F.~G.~Tricomi,
\emph{Tables of Integral Transforms. Vol.~I.,} Bateman Manuscript
Project, McGraw-Hill, New York, 1954.

\bibitem{DLMF} F.~W.~J.~Olver et al. (eds.), \emph{ NIST Handbook of Mathematical
Functions}, Cambridge University Press, New York, 2010. Online version:
{\tt http://dlmf.nist.gov}.

\bibitem{Pengetal2011}  L. Peng, H. Sun, D. Sun, and J. Yi, The geometric structures and instability of entropic dynamic models, {\it Adv. Math.} {\bf 227} (2011), 459--471.

\bibitem{Perks}W.~Perks, On some experiments in the graduation of mortality statistics, \emph{J.~Inst.~Actuar}.~\textbf{63} (1932), 12--57.


\bibitem{PitmanYor}J.~Pitman and M.~Yor, Infinitely divisible laws
associated with hyperbolic functions,\emph{ Canad.~J.~Math.}~\textbf{55}
(2003), 292--330.

\bibitem{Umbral}S.~Roman, \emph{The Umbral Calculus}, Pure and Applied Mathematics, vol.~111, Academic Press Inc., London, 1983.

\bibitem{Sun}H.~Sun, Z.~Zhang, L.~Peng and X.~Duan, \textit{An
Elementary Introduction to Information Geometry}, Science Press, Beijing,
2016. 

\bibitem{alphaParallel}J.~Takeuchi and S.~Amari, $\alpha$-parallel prior and its properties, {\it IEEE Trans.~Inform.~ Theory} \textbf{51} (2005), 1011--1023.

\bibitem{Zidek1969} J. V. Zidek, A representation of Bayesian invariant procedures in terms of Haar measure, {\it Ann. Inst. Statist. Math.} {\bf 21} (1969), 291--308. 
\end{thebibliography}
\end{document}